\newtheorem{thm}{Theorem}[section]
\newtheorem{prop}[thm]{Proposition}
\theoremstyle{definition}
\newtheorem{example}[thm]{Example}
\title{Tree hook length formulae, Feynman rules and B-series}
\author{Bradley R. Jones and Karen Yeats}
\begin{document}
\begin{abstract}
We consider weighted generating functions of trees where the weights are products of functions of the sizes of the subtrees. 
This work begins with the observation that three different communities, largely independently, found substantially the same result concerning these series.  We unify these results with a common generalization.
Next we use the insights of one community on the problems of another in two different ways.
Namely, we use the differential equation perspective to find a number of new interesting hook length formulae for trees, and we use the body of examples developed by the combinatorial community to give quantum field theory toy examples with nice properties.
\end{abstract}

\maketitle

\section{Introduction}

The tree factorial, $t!$, for a rooted tree, $t$, is the product of the sizes of the subtrees of $t$.  For example 
\[
\vcenter{\hbox{\includegraphics{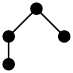}}}! = 1\cdot2\cdot1\cdot 4 = 8.
\]  The tree factorial is an elegant and classical function on rooted trees.  Three different communities each working with rooted trees for different reasons generalized this simple example in their own language and from their own perspective.  The main results of each of these generalizations are equivalent over their common hypotheses.

\medskip

The first community is the enumerative combinatorics community. 
For the tree factorial itself, Knuth (\cite{Kn73} p70) gave as an exercise to show that $\frac{|t|!}{t!}$ counts the number of ways to label a plane tree, $t$, with increasing labels. The enumerative combinatorics perspective is to use generalizations of the tree factorial to produce equations which equate a power series, called a hook length series, to the generating function of a combinatorial class. These equations, called hook length formulae, are of importance in combinatorics as they often imply bijections between combinatorial classes.

An early example of a hook length formula was given by Postnikov in 2004 \cite{Po04}:
\begin{equation}
\label{eq of Postnikov}
\sum_{t \in \mathcal{B}_n}n!\prod_{v \in V(t)}\left(1 + \frac{1}{|t_v|}\right) = 2^n(n + 1)^{n-1}.
\end{equation}
The left hand side of the equation is a hook length series that also counts the number of bicoloured binary plane trees with a particular labelling. The right side of the equation counts the number of bicoloured labeled forests. In 2005, Seo \cite{Se05} developed a bijection between these two combinatorial classes.  Other combinatorial works on hook length formulae include: \cite{BW89,Sa01,GS06,CY08,Ha08,Ya08,CGG09,SZ09,Ha10,CGG13,FG13,KP12b}.

Kuba and Panholzer \cite{KP12} discovered a general identity of hook length series in the form of a recurrence relation on the coefficients of the hook length series.  More recently they extended their results to a study of multilabelled increasing trees \cite{KP14}.

\medskip

The second community is the B-series community.  B-series are power series solutions of differential equations indexed by trees which were originally developed in the analysis of Runge-Kutta methods \cite{Bu63}.  The tree factorial is used as a statistic in the analysis of Runge-Kutta methods for computing approximate solutions of differential equations \cite{HNW93}.  Mazza in \cite{Ma04} gives a theorem concerning such B-series solutions which is equivalent to the Kuba and Panholzer recurrence.

\medskip

The third community is the community which takes a Hopf algebraic approach to renormalization in quantum field theory.  This approach began with the work of Connes and Kreimer \cite{CK98}.  The underlying algebraic structure here is the same as for B-series, a fact which was recognized by Brouder in 2000 \cite{Br00}.  
$\frac{1}{t!}$ defines the simplest non-trivial Feynman rules for rooted trees and is the leading term more generally, see Panzer \cite[p.~38]{Pa11}.  Panzer \cite{Pa11} extended this to a broader understanding of how the algebraic structure and the Feynman rules interact.  His Feynman rules on trees function as hook weights and so again hook length formulae appear.

\medskip

Each community thus has a differently flavoured and independently achieved, perspective on the main result.  From an enumerative combinatorics perspective the result is given in terms of coefficient extraction, from the B-series perspective the result is given in terms of a differential equation, and from the combinatorial Hopf algebra and quantum field theory perspective the result is given in terms of an integral equation and a universal property.
We bring the results of all three communities together into a common language, explaining their set ups and giving a common generalization.

We then look at two ways in which we can use the insights of one community to throw light on the questions of another.  
First, in section \ref{sec using Mazza} we look at using the differential equation formulation to obtain new combinatorial insights.  Specifically, we develop new methods to apply the differential equation in cases where the hook weights are not nice functions in the sense that they are either piecewise or their growth is too fast.  We then give a table of new hook length formulae, some found using these methods and some found with existing methods. 
Second, in section \ref{sec qft models} we use the many examples from the enumerative combinatorics to obtain interesting toy models for quantum field theory.  Given a hook length formula, the translation into quantum field theory language is as follows: the tree class used determines the Dyson-Schwinger equation; the  hook weight determines the Feynman rules; the hook length formula itself gives a nice form for the Green function.

The first four sections consist of results from the first author's MSc thesis \cite{Jo14}.

\section{Background and notation}

We will follow the notation of Flajolet and Sedgewick \cite{FS09} for combinatorial specifications and generating functions.  Combinatorial classes will be given script letters, e.g.~$\mathcal{C}$, with the generating function of the class being given the associated roman letter, e.g.~$C(z)$, except as otherwise specified.  $\mathcal{C}_n$ denotes those elements of $\mathcal{C}$ of size $n$, and generating functions are ordinary in the unlabelled case $C(z) = \sum_{n\geq 0}|\mathcal{C}_n|z^n$ and exponential in the labelled case $C(z) = \sum_{n\geq 0}|\mathcal{C}_n|z^n/n!$.  We use standard combinatorial operators including $\times$ for cartesian product, $\star$ for labelled product, and \textsc{seq} for the sequence operator.

Here we are primarily interested in combinatorial classes of trees. An unlabelled (labelled) class of trees, $\mathcal{T}$, is \emph{simple} if there exists a combinatorial operator, $\Phi$, and a size preserving bjiection, $\gamma: \mathcal{T} \to \mathcal{Z} \times \Phi(\mathcal{T})$, ($\gamma: \mathcal{T} \to \mathcal{Z} \star \Phi(\mathcal{T})$), such that for all $x \in \Phi(\mathcal{T})$, $x = \{t_1, \ldots, t_k\}$ for some $t_1, \ldots, t_k \in \mathcal{T}$ and for $t \in \mathcal{T}$, $\gamma(t) = (\bullet, \{t_1, \ldots, t_k\})$ if and only if $t$ is a tree where $t_1, \ldots, t_j$ are the subtrees of $t$ whose roots are the children of the root of $t$. Thus a simple tree is one where every vertex has a $\Phi$-structure of children.  Write $\phi$ for the power series corresponding to the combinatorial operator $\Phi$.

For a forest, $f$, and a vertex, $v \in V(f)$, we denote as $f_v$ the subtree of $f$ whose root is $v$. For a subset of vertices, $W \subset V(f)$, we denote $f_W$ to be the forest with trees, $f_v$ for $v \in W$.

In order to define the main theorem and use its applications to prove hook length formulae, we shall define decorated trees. 
A \emph{decorated tree} is a rooted tree where each vertex is given a positive integer size. The size of a decorated tree is the sum of the sizes of it vertices.

We can define simple classes of decorated trees similarly to simple classes of ordinary trees. We say a unlabelled (labelled) class of decorated trees, $\mathcal{T}'$, is \emph{simple} if there exists a bivariate combinatorial operator, $\Phi$, and a size preserving bjiection, $\gamma: \mathcal{T}' \to \mathcal{Z} \times \Phi(\mathcal{Z}, \mathcal{T}')$, ($\gamma: \mathcal{T}' \to (\mathcal{Z} \star \Phi(\mathcal{Z}, \mathcal{T}')$), such that for all $x \in \Phi(\mathcal{Z}, \mathcal{T}')$, $x = (\bullet^i, \{t_1, \ldots, t_k\})$ for some $t_1, \ldots, t_k \in \mathcal{T}'$ and $i \in \mathbb{N}$ and for $t \in \mathcal{T}'$, $\gamma(t) = (\bullet, (\bullet^{i-1}, \{t_1, \ldots, t_k\}))$ if and only if $t$ is a tree where $t_1, \ldots, t_j$ are the subtrees of $t$ whose roots are the children of the root of $t$ and the root of $t$ has size $i$.

\medskip

We can build an algebra out of any rooted tree class, $\mathcal{T}$, by simply taking the polynomial algebra generated by the elements of the class.  Viewing monomials of trees as disjoint unions of trees we can also view this algebra as the vector space spanned by all forests of trees from $\mathcal{T}$ with disjoint union as multiplication.  Note that even if $\mathcal{T}$ is a simple class of trees we have not imposed a $\Phi$-structure on the forests -- in cases where the $\Phi$-structure has a natural algebraic interpretation this can be done, for example plane trees would correspond to the noncommutative polynomial algebra to preserve the order structure on forests, see for example \cite{Fo08}.

Suppose now that $\mathcal{T}$ is a simple class of trees.  Then every subtree of a tree in the class is also in the class, and so we can define a bialgebra structure using the following coproduct
\[
\Delta(f) = \sum_W f_W \otimes (f\setminus f_W)
\]
where the sum runs over subsets of vertices of the forest $f$ with no two vertices descendants one of the other.
The counit in this case is the algebra homomorphism which maps the empty tree to $1$ and maps every nonempty tree to $0$.  This gives a graded connected bialgebra and hence a Hopf algebra.   In the case where $\mathcal{T}$ is the class of all rooted trees, either decorated or undecorated, this is known as the Connes-Kreimer Hopf algebra of rooted trees and we will denote it $H_{\mathcal{R}}$ in the undecorated case and $H_{\mathcal{R}'}$ in the decorated case.

Define $B_+ \in \text{End}(H_\mathcal{R})$ such that for trees, $t_1, \ldots, t_n$, $B_+(t_1\cdots t_n)$ is the tree whose root is adjacent to the trees $t_1, \ldots, t_n$.  Tor each decoration $c\in \mathbb{Z}_{\geq 0}$ we can define $B_+ \in \text{End}(H_\mathcal{R}')$ such that $B_+(t_1\cdots t_n)$ is the tree constructed in the same way with root given size $c$.

\section{The unified result}

In this section we will discuss the results of each community and give an encapsulating result.  To begin with we need some definitions and notation.

\subsection{Preliminaries}
We say a map, $B$, from a class of forests, $\mathcal{T}$, to a field, $\mathbb{K}$, is a \emph{hook weight} if there exist $B_n \in \mathbb{K}$ such that for all forests $f \in \mathcal{T}$: $B(f) = \prod_{v \in V(f)}B_{\left|f_v\right|}$.

We can take the power series of a hook weight applied to each tree in a class of trees. We call this power series the \emph{hook length series} of $\mathcal{T}$ with respect to $B$ and denote it by \[F_{\mathcal{T}, B}(z) = \sum_{t \in \mathcal{T}}B(t)z^{|t|}.\]

For a power series $\phi(x) = \sum_{n \geq 0}\phi_nx^n$, we call, $F_{\phi, B}(z)$ given by 
\[F_{\phi, B}(z) = \sum_{t \in \mathcal{O}'}w_{\phi}(t)B(t)z^{|t|},\]
the \emph{hook length series} of $\phi$. Here $\mathcal{O} \cong \mathcal{Z} \times \textsc{seq}(\mathcal{O})$ is the class of plane trees and $w_{\phi(t)} = \prod_{v \in t}\phi_{\text{deg}(v)}$.

We can also define hook weights for decorated forests. A map, $B: \mathcal{T}' \to \mathbb{K}$, is called a \emph{hook weight} of the class of decorated forests, $\mathcal{T}'$, if there exist $B_n \in \mathbb{K}$ such that for all decorated forests, $f \in \mathcal{T}'$: $B(f) = \prod_{v \in V(f)}B_{\left|f_v\right|}$. This definition is the same as for ordinary trees except that the size of a decorated forest is the sum of the size of its vertices instead of the number of vertices it has.

Thus we can also define the \emph{hook length series} for a class of decorated trees, $\mathcal{T}'$:
\[F_{\mathcal{T}', B}(z) = \sum_{t \in \mathcal{T}'}B(t)z^{|t|}\]
and for a power series  $\varphi(z, x) = \sum_{n,m \geq 0}\phi_{m,n}z^mx^n$:
\[F_{\varphi, B}(z) = \sum_{t \in \mathcal{O}'}w_{\varphi}(t)B(t)z^{|t|}\]
where $\mathcal{O}' \cong \textsc{seq}(\mathcal{Z}) \times \textsc{seq}(\mathcal{O}')$ is the class of decorated plane trees and $w_{\varphi(t)} = \linebreak \prod_{v \in t}\varphi_{|v|-1, \text{deg}(v)}$.

\medskip

The B-series and quantum field theory communities take a more functional approach and so we need the following additional definitions. For a hook weight, $B$, let $L_B \in \text{End}(\mathbb{K}[z])$ be the map defined by $L_B(z^n) = B_{n+1}z^{n+1}$ and extended linearly. Similarly, define $L_B^*$ to be the linear operator such that $L_B^*(z^n) = (n+1)B_{n+1}z^n$ for all $n \in \mathbb{N}$.

Define $\theta$ to be the operator that takes $f(z)$ to $\theta(f)(z) = z\frac{d}{dz}f(z)$. A useful property of this operator is that $P(\theta)(z^n)=P(n)z^n$ for all polynomials, $P(x) \in \mathbb{K}[x]$.

\subsection{The main result}

Now we are ready to give the main result, first without decorations and then with decorations, as found in the first author's MSc thesis \cite{Jo14}.

\begin{thm}
\label{thm with main result}
Let $\phi(x)$ be a formal power series and $B$ be a hook weight.  Then we have the following three properties.
\begin{enumerate}
\item $F_{\phi, B}$ satisfies the recurrence:
\[[z^n]F_{\phi, B}(z) = B_n[z^{n-1}]\phi(F_{\phi, B}(z)),\quad{} \forall k \geq 1.\]
\item $F_{\phi, B}$ satisfies:
\[F_{\phi, B}(z) = L_B(\phi(F_{\phi, B}(z))).\]
\item  $F_{\phi, B}$ is a solution to the differential equation:
\[F_{\phi, B}'(z) = L_B^*(1+\theta)(\phi(F_{\phi, B}(z))).\]
\end{enumerate}
\end{thm}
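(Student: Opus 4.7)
The plan is to prove the three properties in the order stated, using (1) as the combinatorial foundation and deducing (2) and (3) from it by algebraic manipulation. The work amounts to a tree decomposition followed by bookkeeping with the operators $L_B$, $L_B^*$, and $\theta$.

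For (1), I would fix $n\geq 1$ and expand $[z^n]F_{\phi,B}(z)$ as a sum over plane trees of size $n$. By the specification $\mathcal{O}\cong\mathcal{Z}\times\textsc{seq}(\mathcal{O})$, such a tree $t$ is a root adjacent to an ordered list of subtrees $t_1,\ldots,t_k$ with $\sum|t_i|=n-1$. The weight $w_\phi(t)=\phi_k\prod_i w_\phi(t_i)$ factors at once, and the hook weight factors as $B(t)=B_n\prod_i B(t_i)$ because $|t_v|=|(t_i)_v|$ for every $v\in t_i$ while the root itself contributes $B_{|t|}=B_n$. Pulling $B_n$ out of the sum over trees of size $n$ and recognising the remainder as $\sum_{k\geq 0}\phi_k[z^{n-1}]F_{\phi,B}(z)^k$ produces $B_n[z^{n-1}]\phi(F_{\phi,B}(z))$, which is the recurrence.

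For (2), the definition $L_B(z^{n-1})=B_n z^n$ together with linearity gives $[z^n]L_B(h(z))=B_n[z^{n-1}]h(z)$ for any power series $h$. Substituting $h=\phi(F_{\phi,B})$ reproduces the recurrence of (1) coefficient by coefficient for $n\geq 1$, and both sides of the asserted identity have zero constant term, so (2) follows. For (3), I would differentiate (2): writing $g=\phi(F_{\phi,B})=\sum_m g_m z^m$, we have $L_B(g)=\sum_m g_m B_{m+1}z^{m+1}$, and termwise differentiation yields $F_{\phi,B}'(z)=\sum_m(m+1)g_m B_{m+1}z^m$. This final form is then identified with $L_B^*(1+\theta)(g)$ by unpacking the actions $(1+\theta)(z^m)=(m+1)z^m$ and $L_B^*(z^m)=(m+1)B_{m+1}z^m$ on the monomial basis.

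The only genuine obstacle is the careful tracking in (1) that the root contribution $B_{|t|}=B_n$ factors cleanly out of the sum; this relies on $B$ being a hook weight (a product of $B_{|f_v|}$ over vertices) rather than a more complicated statistic on the tree. Steps (2) and (3) are then forced by linearity of $L_B$ and termwise differentiation respectively, so once (1) is established the remaining work is entirely routine eigenvalue matching.
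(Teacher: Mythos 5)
Your overall route is the same as the paper's: establish the recurrence (1) by decomposing a plane tree into its root and the ordered list of root-subtrees, factoring $w_\phi$ and $B$ across that decomposition, and then obtain (2) and (3) by matching coefficients. (The paper phrases (1) as an induction on $n$, but like you it only ever uses the definition of $[z^{n_i}]F_{\phi,B}$ for the subtree sums, so your non-inductive version is equivalent.) Parts (1) and (2) of your argument are correct, including the observation that the clean factorization $B(t)=B_n\prod_i B(t_i)$ is exactly what the hook-weight hypothesis provides, and the check that both sides of (2) have vanishing constant term.

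There is, however, a concrete slip in your part (3). You correctly compute $F_{\phi,B}'(z)=\sum_m (m+1)\,g_m B_{m+1}z^m$ where $g=\phi(F_{\phi,B})$. But you then claim to identify this with $L_B^*(1+\theta)(g)$ by ``unpacking'' both actions $(1+\theta)(z^m)=(m+1)z^m$ and $L_B^*(z^m)=(m+1)B_{m+1}z^m$. If you actually compose those two operators you get
\[
L_B^*\bigl((1+\theta)(z^m)\bigr) = (m+1)\,L_B^*(z^m) = (m+1)^2 B_{m+1} z^m,
\]
which carries an extra factor of $(m+1)$ and does not match your (correct) expression for $F_{\phi,B}'$. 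The root of the trouble is the paper's overloaded notation: in the theorem, $L_B^*(1+\theta)$ is meant as a single operator, namely the function $n\mapsto nB_n$ evaluated at $1+\theta$ in the sense of the convention $P(\theta)(z^n)=P(n)z^n$, so that $L_B^*(1+\theta)(z^m)=(m+1)B_{m+1}z^m$ in one step --- which is precisely the operator your derivative computation produces. Your write-up instead treats it as the composite of the linear map $L_B^*$ (as defined on monomials) with $1+\theta$, which double-counts the factor $(m+1)$. The fix is purely notational --- either state that $F_{\phi,B}'=L_B^*(g)$ with $L_B^*$ as the paper defines it on monomials, or adopt the functional reading of $L_B^*(1+\theta)$ --- but as written the final identification in (3) does not close.
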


The first item of the theorem is Kuba and Panholzer's result \cite{KP12}.  The second item is Mazza's result \cite{Ma04}.  The combinatorial community also had this differential equation formulation in certain cases \cite{LV86,BFS92,CGG09,Se05,GS06,KP14}.
Panzer \cite{Pa11} had some particular cases in the form of the third item.  He also looked at decorated trees as they are very natural in the renormalization Hopf algebra context.  In the decorated language we can give a simultaneous generalization of all of these results as follows.

\begin{thm}
\label{thm with main result decorated}
Let $\varphi(z, x)$ be a bivariate formal power series and $B$ be a hook weight.  Then we have the following three properties.
\begin{enumerate}
\item $F_{\varphi, B}$ satisfies the recurrence:
\[[z^n]F_{\varphi, B}(z) = B_n[z^{n-1}]\varphi(z, F_{\varphi, B}(z)),\quad{} \forall k \geq 1.\]
\item $F_{\varphi, B}$ satisfies:
\[F_{\varphi, B}(z) = L_B(\varphi(z, F_{\varphi, B}(z))).\]
\item  $F_{\varphi, B}$ is a solution to the differential equation:
\[F_{\varphi, B}'(z) = L_B^*(1+\theta)(\varphi(z, F_{\varphi, B}(z))).\]
\end{enumerate}
\end{thm}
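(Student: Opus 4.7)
The plan is to establish (1) directly from the recursive decomposition of decorated plane trees, and then derive (2) and (3) as algebraic consequences.

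For (1), I would decompose each nonempty $t \in \mathcal{O}'$ uniquely as a root of positive size $i$ together with an ordered tuple of children subtrees $(t_1, \ldots, t_k)$. Both weights factor through the root: $B(t) = B_{|t|}\prod_j B(t_j)$ and $w_\varphi(t) = \varphi_{i-1,k}\prod_j w_\varphi(t_j)$, and the monomial splits as $z^{|t|} = z^i\prod_j z^{|t_j|}$. Summing contributions of all trees of total size $n$, and noting that the sum over ordered $k$-tuples of subtrees reproduces $F_{\varphi, B}^k$, I would obtain
\[
[z^n] F_{\varphi, B}(z) = B_n \sum_{i \geq 1,\, k \geq 0} \varphi_{i-1, k}\,[z^{n-i}] F_{\varphi, B}(z)^k.
\]
Reindexing $m = i-1$ would turn the right-hand side into $B_n[z^{n-1}]\varphi(z, F_{\varphi, B}(z))$, which is the recurrence in (1).

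For parts (2) and (3), the work is algebraic. The defining property of $L_B$ gives the coefficient identity $[z^n] L_B(h) = B_n [z^{n-1}] h$ for any $h \in \mathbb{K}[[z]]$, so the recurrence in (1) would immediately translate coefficient-by-coefficient into the functional equation in (2); the constant terms on both sides vanish since all trees have positive size and $L_B$ raises degree by one. To obtain (3), I would differentiate (2) in $z$ and use the identity $\frac{d}{dz}\circ L_B = L_B^*$, which is a direct check on monomials; the factor $(1+\theta)$ appearing on the right-hand side of (3) reflects the factorisation $L_B^* = (1+\theta)\circ R_B$ with $R_B(z^n) = B_{n+1}z^n$ the diagonal operator.

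The main obstacle will be the bookkeeping in (1): I need to verify that after isolating the root's contribution in both $B(t)$ and $w_\varphi(t)$, the remaining sum over children is exactly $[z^{n-1}]\varphi(z, F_{\varphi, B})$, with the degree shift arising from the positive-integer root size correctly handled and without overcounting ordered tuples. Once (1) is in place, parts (2) and (3) follow from definition-chasing and a single differentiation.
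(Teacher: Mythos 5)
Your proposal is correct and takes essentially the same route as the paper: both establish (1) by decomposing a decorated plane tree into its root (of size $i$, with $j$ ordered children) and pulling the factors $B_n$ and $\varphi_{i-1,j}$ out of the sum (the paper phrases this as an induction on $n$), and both then obtain (2) and (3) as coefficient-by-coefficient reformulations of the recurrence. The one wrinkle is in (3): differentiating (2) on monomials gives $F' = L_B^*(\varphi(z,F))$ under the stated definition $L_B^*(z^n) = (n+1)B_{n+1}z^n$, so the extra $(1+\theta)$ in the theorem only comes out right if $L_B^*$ is read there as the diagonal operator $z^n \mapsto B_{n+1}z^n$ (your $R_B$) — which is exactly the reading the paper's own proof uses implicitly in its identity $[z^n]L_B^*(1+\theta)(\varphi(z,F)) = (n+1)B_{n+1}[z^n]\varphi(z,F)$, so you and the paper are absorbing the same notational discrepancy.
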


\begin{proof}
We will first prove the recurrence by induction on $n$.

For $n = 1$, \[[z^1]F_{\varphi, B}(z) = w_\varphi(\bullet)B(\bullet) = \varphi_{0,0}B_1 = B_1[z^0]\varphi(z, F_{\varphi, B}(z)).\]

For $n > 1$,
\begin{align*}
[z^n]F_{\varphi, B}(z) &= \sum_{t \in \mathcal{O}'_n}w_\varphi(t)B(t) \\
 &= \sum_{i = 1}^n\sum_{j \geq 1}\varphi_{i,j}B_n\sum_{\stackrel{n_1 + \cdots + n_j = n-i-1}{n_1, \ldots, n_j \geq 1}}\sum_{t_1 \in \mathcal{O}'_{n_1}, \ldots, t_j \in \mathcal{O}'_{n_j}}\prod_{l=1}^{j}\left(w_\varphi(t_l)B(t_l)\right) \\
 &= B_n\sum_{i = 1}^n\sum_{j \geq 1}\varphi_{i,j}\sum_{\stackrel{n_1 + \cdots + n_j = n-i-1}{n_1, \ldots, n_j \geq 1}}\prod_{l=1}^{j}\left(\sum_{t_l \in \mathcal{O}'_{n_l}}w_\varphi(t_l)B(t_l)\right) \\
 &= B_n\sum_{i = 1}^n\sum_{j \geq 1}\varphi_{i,j}\sum_{\stackrel{n_1 + \cdots + n_j = n-i-1}{n_1, \ldots, n_j \geq 1}}\prod_{l=1}^{j}[z^{n_l}]F_{\varphi, B}(z) \\
 &= B_n [z^{n-1}]\varphi(z, F_{\varphi, B}(z)).
\end{align*}

The other two properties are equivalent to the recurrence. This can be seen by applying coefficient extraction to the equations:
\[
	[z^{n}]F'(z)=(n+1)[z^{n+1}]F(z)
\]
and
\[
	[z^n]L_B^*(1+\theta)(\varphi(z, F_{\varphi, B}(z))) = (n+1)B_{n+1}[z^n]\varphi(z, F_{\varphi, B}(z))
\]
give that the differential equation is equivalent to the recurrence. Also,
\[
	[z^n]L_B(\varphi(z, F_{\varphi, B}(z))) = B_{n+1}[z^n]\varphi(z, F_{\varphi, B}(z))
\]
gives that the second property is equivalent to the recurrence.
\end{proof}

\section{Using the differential equation in enumerative combinatorics}\label{sec using Mazza}

The first item in the theorems does not explain why some choices of class and final form result in nice hook weights, or why certain classes and hook weights give a nice final form.  It also doesn't give any hint of where to look for novel hook length formulae of combinatorial interest which are not simple extensions of know results.  

The differential equation perspective gives a little bit of traction on these issues.  One way for a choice of hook weight and tree class to have a nice hook length formula is if the differential equation is solvable.  One place to look for novel hook length formulae is among appropriate differential equations.  We have a number of such new hook length formulae, all but one of which were first reported in the first author's MSc thesis \cite{Jo14} and are shown here in table~\ref{table of new hlf}.  Kuba and Panholzer also recently realized the value differential equations for finding interesting formulae in their new study on bilabelled trees (\cite{KP14} section 5). 

To be able to deal with hook weights which do not correspond nicely to continuous functions we first need to develop some tools.

\subsection{New tools for tree hook length formulae}

To use the differential equation we need to convert the hook weights, which are defined on the natural numbers, to functions defined on $\mathbb{R}_{> 0}$.  To extract useful information from the differential equation these functions need to stay as simple as possible.  
Some hook weights naturally correspond to piecewise functions and are best dealt with by breaking the tree specification up to match the pieces, as discussed in the first two methods below.  Others have an exponential dependence which is not well-behaved in the differential equation but can be dealt with by scaling as discussed in the third method below.

\subsubsection{Leafless method and system method}
			The first method we present is called the system method. The method is used when
			\[B_k =
			\begin{cases}
				B_k^{(1)} & \text{if } k \in P_{1} \\
				 & \vdots \\
				B_k^{(m)} & \text{if } k \in P_{m} \\
			\end{cases}
			\]
			for some partition $P_{1} \cup \cdots \cup P_{m} = \mathbb{N}^+$ and $L_{B^{(i)}}^* \neq L_{B^{(j)}}$ for all $i \neq j$.
			
			For a combinatorial class, $\mathcal{C}$, and set, $S \in \mathbb{N}$, let $\mathcal{C}_S = \bigcup_{n \in S} \mathcal{C}_n$.
			
			Given a simple tree class $\mathcal{T}$ suppose that it can be easily separated into classes: $\mathcal{T}_{P_{1}}, \ldots, \mathcal{T}_{P_m}$ with each class satisfying some psuedo-simple relation: \[\mathcal{T}_{P_i} \cong \mathcal{Z} \times \Phi_i(\mathcal{T}_{P_1}, \ldots, \mathcal{T}_{P_m}).\]
			
			From theorem \ref{thm with main result decorated}, each $F_{\mathcal{T}_{P_i}, B^{(i)}}(z)$ satisfies a differential equation of the form:
			\[F_{\mathcal{T}_{P_i}, B^{(i)}}'(z) = L_{B^{(i)}}^*(1+\theta)\phi_i(F_{\mathcal{T}_{P_1}, B^{(1)}}(z), F_{\mathcal{T}_{P_m}, B^{(m)}}(z)).\]
			By adding the solutions to the system of differential equations together we obtain $F_{\mathcal{T}, B}(z)$.
			
			Splitting $\mathcal{T}$ into such and using the system of differential equations to prove a hook length formula is called the \emph{system method}.
			
			\medskip{}
			
                        In the special case where 
      			\[B_k=
				\begin{cases}
					a & \text{ if } k=1 \\
					g(k) & \text{ if } k > 1 \\
				\end{cases}
			\]
			for some function $g$ with $g(1)$ undefined or $g(1) \neq a$, we get 
the system $\mathcal{T}_1 \cong \phi_0 \mathcal{Z}$ and $\mathcal{T}_{> 1} \cong \mathbb{Z} \times (\Phi(\mathcal{T}) - \phi_0)$.
In this case the system method is called the \emph{leafless method}.  It is so called because it essentially removes the leaves of the trees to produce the differential equation. An example of how to use the leafless method can be found in example \ref{plane tree ex}.


\subsubsection{Scaled method}
			The final method we shall present is called the scaled method. The scaled method is used when
			\[B_k = r^{k-1}C_k\]
			for some other hook weight $C$. In this case:
			\[L_B^*(x) = \sum_{i=0}^\infty \sum_{j=0}^{i}\binom{m}{i}\frac{(\ln r)^i}{ri!}c_{m-i}x^m,\]
			which does not give an easily solvable differential equation.

			We can bypass this by the following observation
                        \begin{prop}
                          Suppose $B$ and $C$ are hook weights satisfying $B_k = r^{k-1}C_k$ for some $r\in \mathbb{K}$.  Then $L_B(p(z)) = L_C(p(rz))$ for all $p(z)\in \mathbb{K}[z]$.
                        \end{prop}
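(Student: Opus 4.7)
The plan is to verify the identity on monomials and then invoke linearity, since both $L_B$ and the map $p(z)\mapsto L_C(p(rz))$ are $\mathbb{K}$-linear operators on $\mathbb{K}[z]$.

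First I would unpack the left-hand side on $p(z)=z^n$ using the definition of $L_B$: this gives $L_B(z^n) = B_{n+1}z^{n+1}$, which by hypothesis equals $r^n C_{n+1}z^{n+1}$. Next I would unpack the right-hand side on the same monomial: $p(rz) = r^n z^n$, so $L_C(p(rz)) = r^n L_C(z^n) = r^n C_{n+1} z^{n+1}$ by linearity of $L_C$ and its definition. The two expressions agree, so the identity holds on the monomial basis $\{z^n\}_{n\geq 0}$.

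Finally, since both sides are $\mathbb{K}$-linear in $p$ (the left directly, the right as a composition of the linear substitution $p(z)\mapsto p(rz)$ with the linear operator $L_C$), the identity extends to all $p(z)\in\mathbb{K}[z]$.

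There is no real obstacle here; the only thing to be careful about is not to confuse $L_B$ with $L_B^*$, and to keep track of the index shift in the definition $L_B(z^n)=B_{n+1}z^{n+1}$ so that the exponent of $r$ comes out correctly as $n$ (matching $k-1$ with $k=n+1$).
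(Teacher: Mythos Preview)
Your proposal is correct and matches the paper's proof essentially verbatim: the paper also checks the identity on $z^n$ (computing $L_C((rz)^n)=C_{n+1}r^n z^{n+1}=B_{n+1}z^{n+1}=L_B(z^n)$) and then invokes linearity.
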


                        \begin{proof}
                          For $n \in \mathbb{N}$ we have
                          \[
                          	L_C((rz)^{n}) = C_{n+1}r^nz^{n+1} 
                          	= B_{n+1}z^{n+1} 
                          	= L_B(z^n).
                          \]
                          Therefore, by linearity, $L_B(p(z)) = L_C(p(rz))$ for all $p(z)\in \mathbb{K}[z]$.
                        \end{proof}
                        
			The proposition implies that
			\[[z^n]L_B^*(1+\theta)x(z) = [z^n]L_C^*(1+\theta)x(rz).\]
			
			Therefore for the simple tree class, $\mathcal{T}$, $F_{\mathcal{T}, B}$ solves the differential equation:
			\[F_{\mathcal{T}, B}'(z) = L_C^*(1+\theta)\phi(F_{\mathcal{T}, B}(rz)).\]
			Using this differential equation to prove a hook length formula is called the \emph{scaled method}			
			We call this method the scaled method because we scale the hook length series by $r$.

\subsection{New hook length formulae}

In the first author's MSc thesis \cite{Jo14}, he collected known hook length formula into a catalogue of tables. Here we present a portion of that table, in table \ref{table of new hlf}, that includes the hook length formula discovered and proved by the first author.  The method column of table \ref{table of new hlf} explains how the formula was found and can be proved: by the Kuba-Panholzer recurrence (KP), by the Mazza differential equation (DE), by the leafless method (Leaf), by the system method (Sys), by the scaled method (Sc) or some combination as indicated.

The second formula of the table is new to this paper and is discussed in more detail in example~\ref{new eg}.

\begin{table}
\caption{Table of new hook length formulae}
\label{table of new hlf}

$\begin{array}{|c|l|l|c|}
\hline
\phi & \multicolumn{1}{|c|}{B} & \multicolumn{1}{|c|}{F} & \text{method} \\
\hline
(1+x)^2 &  B_k =  \frac{((k-1)!)^2}{(2k-1)!} & F_n = \frac{2^n}{(n+1)!n!} & \text{KP} \\
(1+x)^2 & B_k=k & F_n=\text{A699}_{n+1} & \text{KP} \\
\hline
1+x+x^2 & B_k = 
\begin{cases}{}
	1 & \text{if } k=1 \\
	\frac{1}{k-1} & \text{if } k > 1
\end{cases}
&  F(z) = \frac{z}{1-z} &  \text{DE} \\
\hline
(1+x)^r &  B_k = \frac{1}{\binom{k+r-2}{r-1}} &  F(z)=\frac{z}{1-z} &  \text{KP} \\
\hline
1+x^r &  B_k =
\begin{cases}
	1 & \text{if } k = 1 \\
	\frac{(r-1)!r^{r-1}}{\prod_{i=0}^{r-2}(k+ir-1)} & \text{if } k > 1
\end{cases}
&  F(z)=\frac{z}{1-z^r} &  \text{KP} \\
\hline
\frac{1}{1-x} &  B_k = 
\begin{cases}
	1 & \text{if } k = 1 \\
	2^{2-k} & \text{if }  k > 1
\end{cases}
&  F(z)=\frac{z}{1-z} &  \text{Sc+Leaf} \\
\frac{1}{1-x} & B_k =
	\begin{cases}
	1 & \text{if } k \leq 2 \\
	\frac{(\sqrt{-1})^{k-1}}{2} & \text{if $k > 2$ is odd} \\
	\frac{(\sqrt{-1})^{k-2}}{2} & \text{if $k > 2$ is even}
\end{cases}
& F(z) = \frac{z+z^2}{1+z^2} & \text{Sc+Sys} \\
\hline
\frac{1}{(1-x)^2} & B_k = \frac{1}{k} & F(z) = 1-(1-3z)^{\frac{1}{3}} & \text{DE} \\
\frac{1}{(1-x)^2} & B_k = 
\begin{cases}
	1 & \text{if } k=1 \\
	\frac{1}{2^{k-3}(k+2)} & \text{if } k > 1
\end{cases}
& F(z) = \frac{z}{1-z} & \text{Sc+Leaf} \\
\hline
e^x &  B_k = 1+\frac{1}{k} &  F(z)= {-2}\log\left(\frac{1+\sqrt{1-4z}}{2}\right) &  \text{DE} \\
e^x &  B_k = \frac{1}{k\binom{k+a-2}{a-1}} &  F(z)=a\log\left(\frac{a}{a-z}\right) &  \text{KP} \\
e^x &  B_k = \frac{2}{k}-1 &  F(z)=\log\left(z+\sqrt{1+z^2}\right) & \text{DE} \\
\hline
\cosh(x) &  B_k=\frac{2}{k}-1 &  F(z)=\log\left(z+\sqrt{1+z^2}\right) &  \text{DE} \\
\hline
1 - \log(1-x) &  B_k=
\begin{cases}
	1 & \text{if } k=1 \\
	\frac{1}{k}-1 & \text{if } k > 1
\end{cases}
&  F(z)=1 + z - \sqrt{1+z^2} &  \text{Leaf} \\
1 - \log(1-x) &  B_k =
\begin{cases}
	1 & \text{if } k=1 \\
	k-1 & \text{if } k > 1
\end{cases}
&  F(z)=1-\frac{1}{\sum_{n\geq 0}n!z^n} &  \text{Leaf} \\
1 - \log(1-x)  &  B_k = 
\begin{cases}
	1 & \text{if } k=1 \\
	\frac{k-1}{2^{k-1}-1} & \text{if } k > 1
\end{cases}
&  F(z)=\frac{z}{1-z} &  \text{Sc+Leaf} \\
\hline
\end{array}$
\end{table}

\begin{example}\label{plane tree ex}
	This example illustrates how to use two of the new methods, the leafless method and the scaled method, in conjunction to prove a hook length formula.
	
	Let $B_k= \frac{2}{2^{k-1}}$ if $k > 1$ and $B_1 = 1$. Also let $\phi(x) = \frac{1}{1-x}$; this $\phi$ encodes the class of plane trees. Finally let $F = F_{\phi, B}$.
	
	Since $B_k$ contains a factor of $\frac{1}{2^{k-1}}$ we shall use the scaled method. Let $C_k=2$ then $B_k=C_k\frac{1}{2^{k-1}}$. By the scaled method \[L_B^*(1+\theta)(x(z)) = L_C^*(1+\theta)\left(x\left(z/2\right)\right) = 2x\left(z/2\right)+2z\frac{d}{dz}x\left(z/2\right),\] because $L_C^*(n)=2n$.
	
	Since $2^{2-1} = 2 \neq 1 = B_1$, to use the differential equation to solve this hook length formula we need to use the leafless method. By the leafless method, $F(z)$ solves the differential equation
	\[F'(z) - \phi_0B_1 = L_B^*(1+\theta)(\phi(F(z))).\]
	
	Putting these two methods together we get that $F(z)$ solves the differential equation:
	\[F'(z) - 1 = \frac{2}{1-F(z/2)}+\frac{F'(z/2)}{(1-F(z/2))^2}.\]
	Plugging in $F(z)=\frac{z}{1-z}$ we can see that the differential equation is satisfied.

	Therefore, $F_{\phi,B}(z)=\frac{z}{1-z}$.
\end{example}


\begin{example}\label{new eg}
	Let $B_k = k$ and $\phi(x) = (1+x)^2$ then $F(z) = F_{\phi, B}(z)$ satisfies the recurrence \[[z^n]F(z)=n[z^{n-1}](1+F(z))^2.\] This recurrence is similar to the recurrence \[S_n = (n-1)\sum_{j=1}^{n-1}S_jS_{n-j}\] from \cite{St77}. The $S_n$ here count the number of irreducible arc diagrams. The OEIS number \cite{OEIS} of this sequence is A699. If we consider $S(z) = \sum_{n \geq 1}S_{n+1}z^n$ then
	\begin{align*}
		[z^n]S(z) &= S_{n+1} \\
		&= n\sum_{j=1}^nS_jS_{n+1-j} \\
		&= n[z^{n-1}](S_1+S(z))^2.
	\end{align*}
	Since $S_1= 1$, we have that $S(z) = F(z)$.
\end{example}

\section{Using tree hook length formulae in quantum field theory}\label{sec qft models}

Combinatorial Dyson-Schwinger equations are functional equations with solutions in \linebreak $H_\mathcal{R}[[z]]$ using grafting operators, products, inverses, and the empty tree, $\mathbb{I}$.  
As an
example consider
\[
  X(z) = \mathbb{I} - zB_+(X(z)^{-1})
\]
where the inverted series should be expanded as a geometric series.  This has as a solution
\[
X(z) = \mathbb{I} - z\vcenter{\hbox{\includegraphics{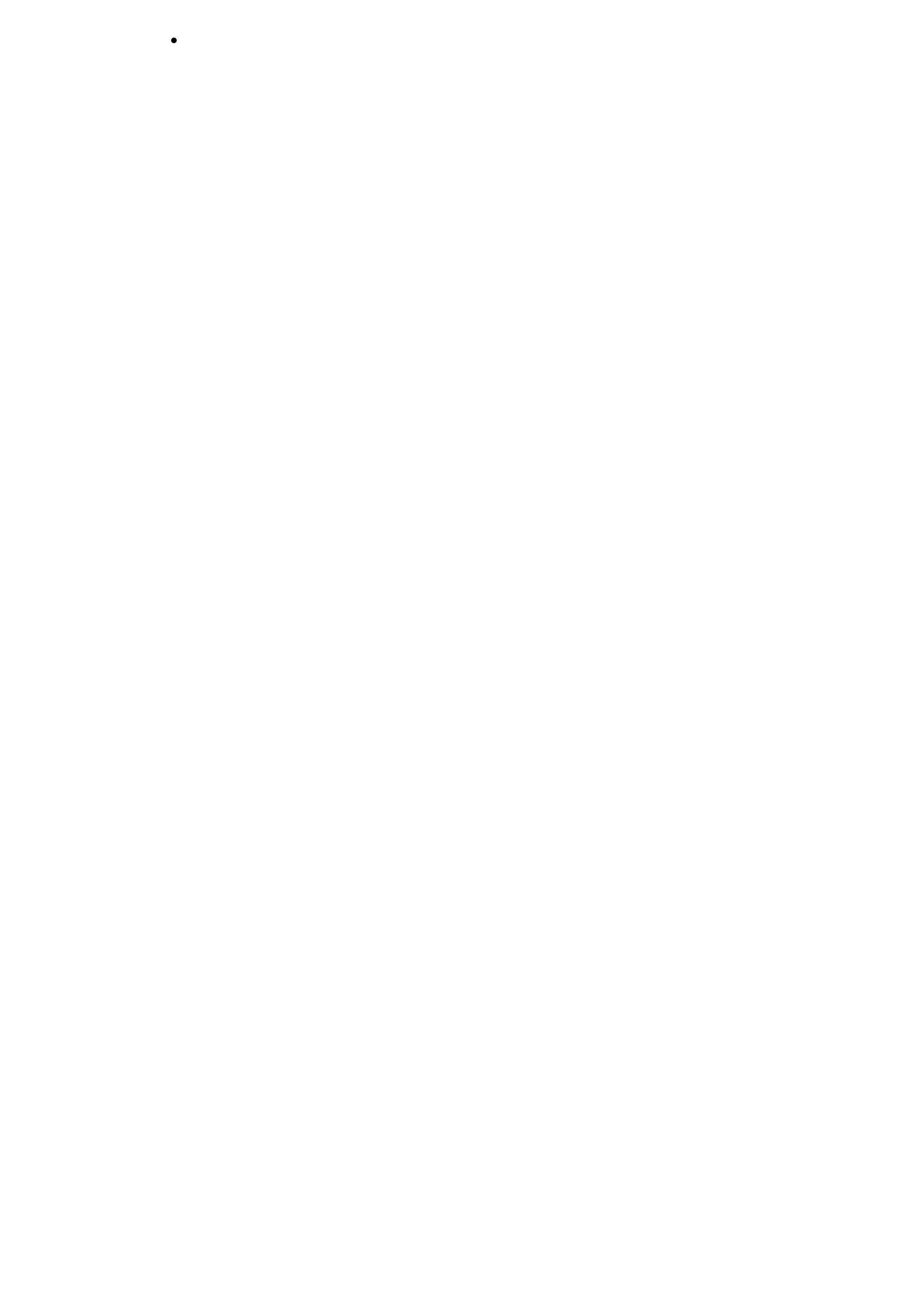}}} - z^2\vcenter{\hbox{\includegraphics{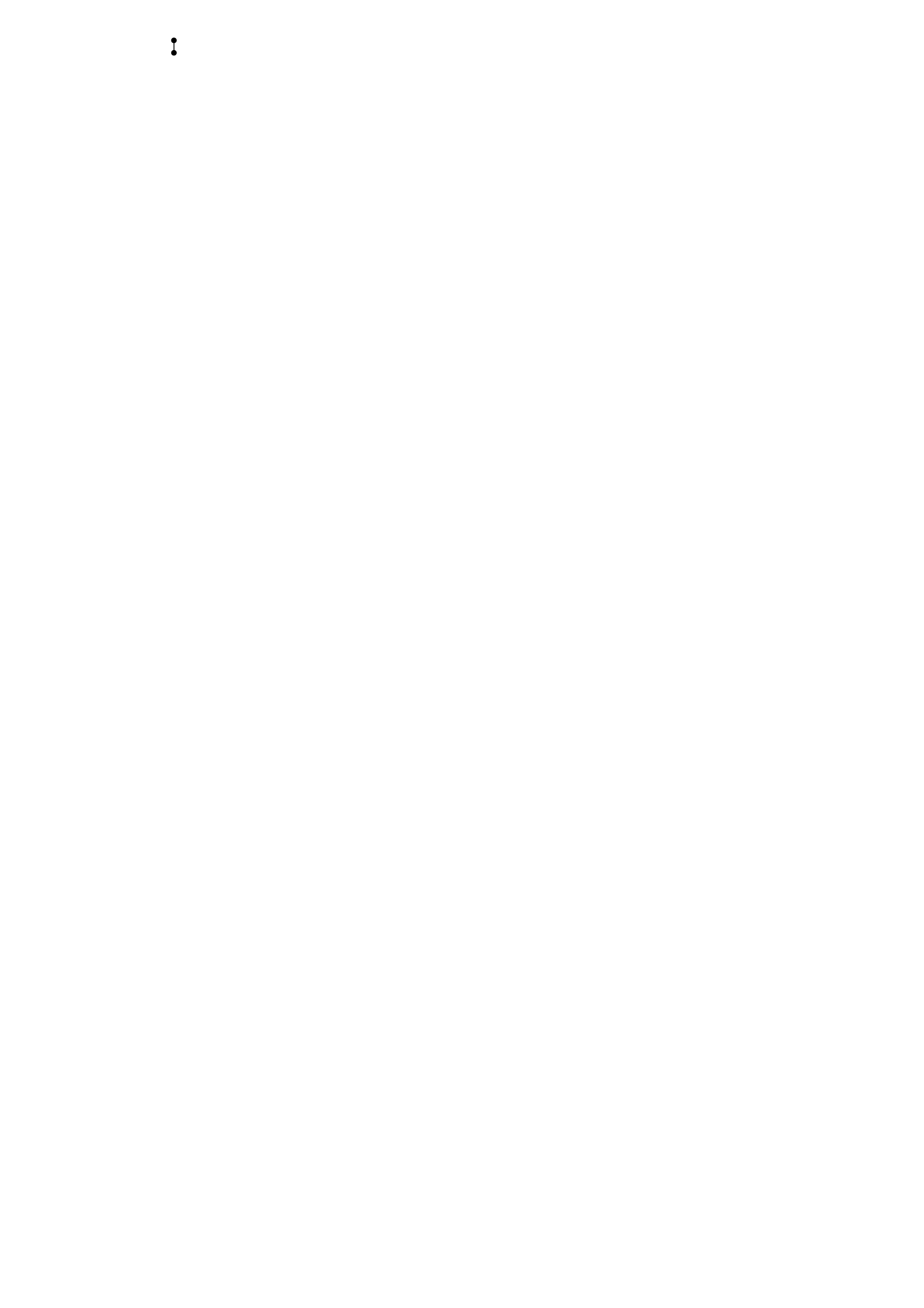}}} - z^3\left(\vcenter{\hbox{\includegraphics{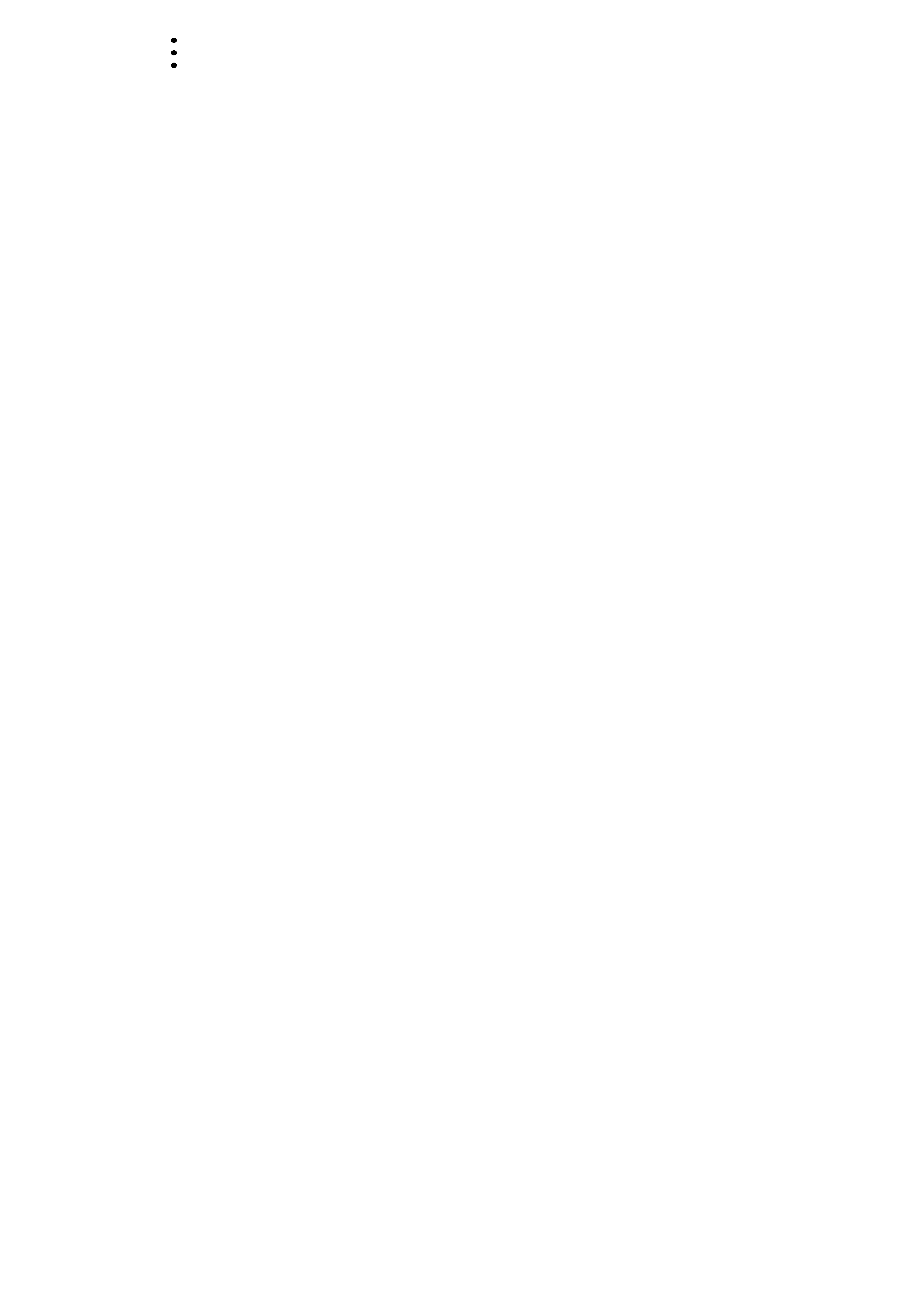}}} + \vcenter{\hbox{\includegraphics{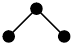}}}\right) - z^4\left(\vcenter{\hbox{\includegraphics{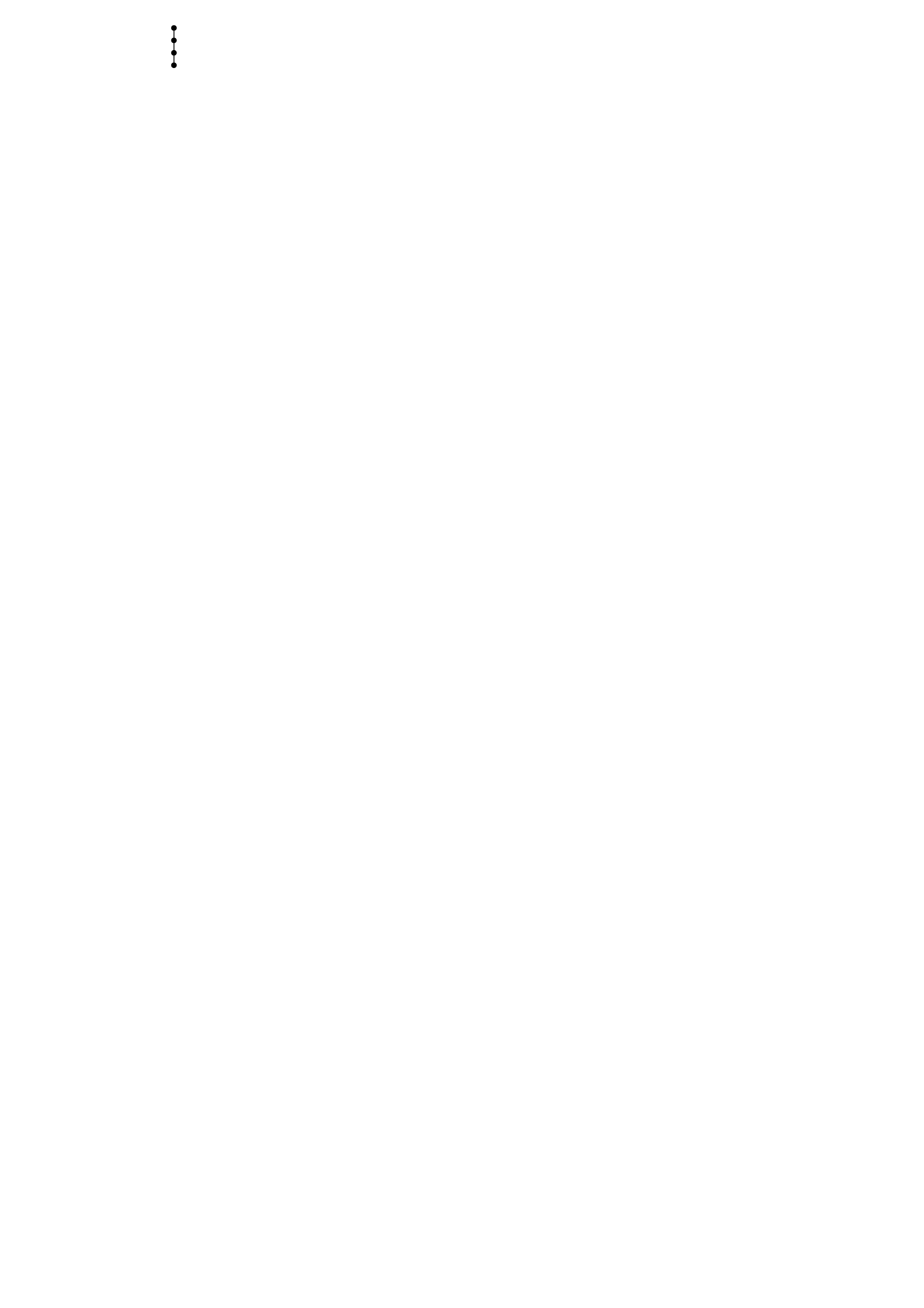}}} + \vcenter{\hbox{\includegraphics{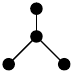}}} + 2\vcenter{\hbox{\includegraphics{treefacteg}}} + \vcenter{\hbox{\includegraphics{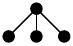}}}\right) + \cdots .
\]
It is possible to have more than one $B_+$ appearing and to have linked systems of equations.  The precise definition of what forms are allowed depends on the context at hand, compare for example \cite{Fo08} and \cite{Ye08}, but in all cases they act as specifications for tree classes:  $B_+$ plays the role of $\mathcal{Z}\times$, inverses play the role of \textsc{seq}.  The series which arise as solutions to combinatorial Dyson-Schwinger equations are intermediates between the combinatorial classes themselves and usual generating functions.  Linear combinations of trees appear as coefficients, not just the number of trees, but since these are trees in $H_\mathcal{R}$ they appear with coefficients reflecting how many trees of a given shape appear in the specified class.

To convert combinatorial Dyson-Schwinger equations into analytic Dyson-Schwinger equations, which are the honest-to-goodness Dyson-Schwinger equations of physics, we need simply to apply Feynman rules.  In the physical situation Feynman rules are rules which convert Feynman graphs into integrals with each edge and vertex of the graph contributing a factor to the integrand.  These integrals are divergent in interesting cases and need to be renormalized.  The end result is a function of various physical parameters, such as the momenta of the particles coming in and out of the process. 

\medskip

In our situation, we have series in trees and we will follow Panzer (\cite{Pa11} p38) by defining Feynman rules as algebra morphisms, $\phi:H_\mathcal{R} \rightarrow A$, to some commutative algebra, $A$, satisfying $\phi \circ  B_+ = L \circ \phi$
for some $L \in \text{End}(A)$.  In the physics case $A$ would be some algebra of functions of the parameters.

Hook weights provide a family of simple examples of such Feynman rules.  Panzer showed (\cite{Pa11} Theorem 2.4.6) that for any $L\in \text{End}(A)$ there exists a unique morphism of unital algebras ${}^L\rho:H_{\mathcal{R}}\rightarrow A$ such that ${}^L\rho\circ B_+ = L\circ {}^L\rho$ and analogously for the decorated case (\cite{Pa11} section 2.5).
Specifically for hook weights we get
\begin{prop}
	Let $B$ be a hook weight. For any forest, $f$, we have
	\[{}^{L_B}\rho(f) = B(f)z^{|f|}.\]
\end{prop}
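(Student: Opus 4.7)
The plan is to invoke the uniqueness clause of Panzer's theorem that was just cited: there is a unique unital algebra morphism $\rho: H_\mathcal{R} \to \mathbb{K}[z]$ satisfying $\rho \circ B_+ = L_B \circ \rho$. So I would define a candidate map $\psi(f) := B(f) z^{|f|}$ on forest monomials, extend linearly, check the three universal-property axioms (unital, multiplicative, intertwines $B_+$ with $L_B$), and conclude $\psi = {}^{L_B}\rho$.

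Unitality is immediate: the empty forest has no vertices and size zero, so $\psi(\mathbb{I}) = 1$ from the empty product. For multiplicativity, given forests $f_1, f_2$, the disjoint union $f_1 f_2$ has vertex set $V(f_1) \sqcup V(f_2)$, and for a vertex $v$ of $f_i$ the subtree $(f_1 f_2)_v$ coincides with $(f_i)_v$. Since a hook weight is by definition a product over vertices, $B(f_1 f_2) = B(f_1) B(f_2)$; combined with $|f_1 f_2| = |f_1| + |f_2|$, this yields $\psi(f_1 f_2) = \psi(f_1)\psi(f_2)$.

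For the intertwining property, take a forest $f = t_1 \cdots t_k$ and let $t = B_+(f)$. Then $|t| = |f| + 1$, and $V(t)$ consists of the vertices of $f$ together with the new root $r$. For $v \in V(f)$, the subtree $t_v$ is unchanged, while $t_r = t$ has size $|f|+1$, so
\[
B(t) = B_{|f|+1} \prod_{v \in V(f)} B_{|f_v|} = B_{|f|+1} B(f).
\]
Hence $\psi(B_+(f)) = B_{|f|+1} B(f) z^{|f|+1}$. On the other side, using the definition $L_B(z^n) = B_{n+1} z^{n+1}$,
\[
L_B(\psi(f)) = B(f)\, L_B(z^{|f|}) = B(f)\, B_{|f|+1}\, z^{|f|+1},
\]
which matches. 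By uniqueness, $\psi = {}^{L_B}\rho$.

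There is no real obstacle here; the proof is essentially bookkeeping. The only thing to be careful about is separating the contribution of the new root under $B_+$ from the contributions of the existing vertices, and noting that the subtree sizes of the old vertices do not change when grafted onto a common root — both of which follow directly from the definition of a hook weight as a product over vertices of sizes of subtrees.
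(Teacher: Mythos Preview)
Your argument is correct. The paper does not actually supply a proof of this proposition; it states it immediately after citing Panzer's universal property (the existence and uniqueness of ${}^{L}\rho$) and evidently regards the claim as an immediate consequence. Your proof is exactly the natural way to cash out that implicit reasoning: define the candidate $\psi(f)=B(f)z^{|f|}$, check it is a unital algebra morphism intertwining $B_+$ with $L_B$, and invoke uniqueness. All three verifications are handled correctly, including the key observation that grafting onto a new root leaves the subtree sizes at existing vertices unchanged and contributes a single new factor $B_{|f|+1}$.
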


Hook length Feynman rules are particularly simple in that their one parameter comes with power the size of the forest and thus carries no new information compared to the counting variable we already had.  Thus in this very simple case we can conflate the two parameters and so for us $A$ will be $\mathbb{K}[z]$ and our Green functions will be single variable functions.

In still simplified but more physically realistic cases the Feynman rules would give polynomials in a parameter, call it $T$, of degree the size of the tree.  Then the Green functions would be functions of both $T$ and $z$ where any monomial in the expansion of the Green function would have degree in $z$ at least as large as the degree in $T$.  The part with the same degree in $z$ and $T$ is known in quantum field theory calculations as the \emph{leading log} part.  Thus another interpretation of the very special case we are discussing here is as the leading log part of a more complicated set up.

\medskip

Returning to hook length Feynman rules, consider what the hook length formulae mean in this context. 
 Hook length formulae tell us that particular choices of combinatorial class and hook weight give nice series, say with a closed form or a nice combinatorial interpretation.  In quantum field theory language hook length formulae give us particular choices of combinatorial Dyson-Schwinger equation and Feynman rules so that the analytic Dyson-Schwinger equation has a closed form, or otherwise is combinatorially nice.

\begin{example}
  For example, take Postnikov's formula, given in the introduction as \eqref{eq of Postnikov}.  Here we are looking at binary trees, so the Dyson-Schwinger equation is $X(z) = \mathbb{I} + zB_+(X(z)^2)$.  The hook weight is given by $B_k = 1 + \frac{1}{k}$, so the Feynman rules, $\phi$, are 
  \[
  t\mapsto \prod_{v\in t} \left(1+\frac{1}{|t_v|}\right)z^{|t|},
  \]
or equivalently
\[
\phi(B_+(f)) = z\left(1+ \frac{1}{|f|}\right)\phi(f).
\]
Then Postnikov's formula tells us that $F_n = 2^n(n+1)^{n-1}$.  Taking the exponential generating function we get $F(z) = \frac{-W(-2z)}{2z}-1$ where $W$ is the Lambert $W$-function  (see \cite{Jo14} p12 for details).  So the Green function $G(z)=1+F(z)$ comes from a series expansion of the Lambert $W$-function.
\end{example}

\begin{example}
  As another example consider the eighth formula in table \ref{table of new hlf}.  This is a nice example because it is new and it uses the usual inverse of tree factorial Feynman rules.  The tree specification in this case is $\mathcal{T} = \mathcal{Z}\times \textsc{seq}(\mathcal{T})^2$ so the Dyson-Schwinger equation is
\[
X(z) = \mathbb{I} - zB_+\left(\frac{1}{X(z)^2}\right).  
\]
Then the hook length formula tells us that the Green function is 
\[
G(z) = 1-F(z) = (1-3z)^{\frac{1}{3}},
\]
which is a nice closed form.
\end{example}

\medskip

A final point concerns the coalgebra structure.  $B_+$ is a Hochschild 1-cocycle \cite{CK98}, specifically
\[
\Delta B_+ = (\text{id}\otimes B_+)\Delta + B_+ \otimes \mathbb{I}
\]
and similarly for each $B_+^a$ in the decorated case.
Panzer shows (\cite{Pa11} Theorem 2.4.6) that if $A$ is a bialgebra and $\Delta L = (\text{id} \otimes L)\Delta + L\otimes 1$ then ${}^L\rho$ is a bialgebra homomorphism, and if further $A$ is a Hopf algebra then ${}^L\rho$ is a Hopf algebra homomorphism.  The analogous statement holds in the decorated case (\cite{Pa11} section 2.5).

Unfortunately, only multiples of the inverse of tree factorial are 1-cocycles.
\begin{prop}
  Let $B$ be a hook weight.  Define $\Delta$ on $\mathbb{K}[z]$ by extending $\Delta(z) = 1\otimes z + z\otimes 1$ as an algebra homomorphism.  If
\[
L_B \Delta = (\text{id}\otimes L_B)\Delta + L_B \otimes 1
\]
then $B_n = c/n$ for some $c\in \mathbb{K}$. 
\end{prop}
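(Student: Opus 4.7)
The approach is to apply both sides of the cocycle identity to a generic monomial $z^n$, use the binomial expansion of $\Delta(z^n)=(1\otimes z+z\otimes 1)^n=\sum_k \binom{n}{k}z^k\otimes z^{n-k}$, and read off the forced relations among the $B_m$ by comparing tensor components. Since $L_B(z^m)=B_{m+1}z^{m+1}$, each side is an explicit finite sum in the basis $\{z^i\otimes z^j\}$, so the computation reduces to equating coefficients.

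Concretely, the plan is as follows. First I would write
\[
L_B\Delta(z^n) \;=\; B_{n+1}\sum_{k=0}^{n+1}\binom{n+1}{k}z^k\otimes z^{n+1-k},
\]
and
\[
(\mathrm{id}\otimes L_B)\Delta(z^n)+(L_B\otimes 1)(z^n) \;=\; \sum_{k=0}^{n}\binom{n}{k}B_{n-k+1}\,z^k\otimes z^{n-k+1}+B_{n+1}z^{n+1}\otimes 1.
\]
The $k=n+1$ term on the left is exactly $B_{n+1}z^{n+1}\otimes 1$, which cancels the lone $L_B\otimes 1$ contribution on the right. For the remaining $0\le k\le n$ the cocycle condition forces the scalar identity
\[
\binom{n+1}{k}B_{n+1}\;=\;\binom{n}{k}B_{n-k+1}.
\]

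Next I would exploit the extreme case $k=n$, which gives $(n+1)B_{n+1}=B_1$, so $B_m=B_1/m$ for all $m\ge 1$; setting $c=B_1$ yields the claimed form. The only loose end is to confirm that no other value of $k$ imposes an incompatible constraint, which is a one-line check: substituting $B_{n+1}=c/(n+1)$ and $B_{n-k+1}=c/(n-k+1)$ both sides reduce to $c\cdot n!/\bigl(k!(n-k)!(n-k+1)\bigr)$, using $(n+1-k)!=(n-k+1)(n-k)!$.

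There is no real obstacle here; the argument is pure coefficient bookkeeping once one notices that the cocycle identity, evaluated on $z^n$, mixes only one unknown $B_{n+1}$ with the earlier $B_m$ and that the $k=n$ slot already pins $B_{n+1}$ down. The mild subtlety worth flagging is that the $k=0$ equation is tautological, so one must pick a nontrivial $k$ to extract information; $k=n$ is the cleanest choice.
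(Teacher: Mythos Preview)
Your argument is correct and mirrors the paper's proof exactly: expand $\Delta(z^n)$ binomially, compare tensor components, and use the $k=n$ slot to force $(n+1)B_{n+1}=B_1$. Your extra consistency check for the other values of $k$ is a pleasant addition but not strictly required, since the proposition only asserts the necessary condition.
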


\begin{proof}
  \begin{align*}
    B_{n+1} \sum_{i=0}^{n+1}\binom{n+1}{i} z^i\otimes z^{n+1-i}  
    = L_B \Delta (z^n) & = (\text{id}\otimes L_B)\Delta + L_B \otimes 1 \\
    & = \sum_{i=0}^n\binom{n}{i} B_{n-i+1}z^i \otimes z^{n-i+1} + B_{n+1}z^{n+1}\otimes 1 
  \end{align*}
  Equating coefficients we get 
  \[
  B_{n+1}\binom{n+1}{i} = B_{n-i}\binom{n}{i}
  \]
  for all $0\leq i \leq n$ and so in particular with $i=n$, $B_{n+1}(n+1) = B_1$ giving the result. 
\end{proof}

Therefore the inverse of tree factorial is special in this quantum field theory context.  None-the-less there are many interesting examples using the inverse of tree factorial (see \cite{Jo14} chapter 6 for a comprehensive listing) and the other hook weights are at least buildable by $B_+$ -- that is they are Feynman rules by the present definition.

\bibliographystyle{plain}
\bibliography{main}

\end{document}